\newtheorem{theorem}{Theorem}[section]
\newtheorem{lemma}[theorem]{Lemma}
\theoremstyle{definition}
\theoremstyle{remark}
\numberwithin{equation}{section}
\newcommand{\bd}{\partial}
\DeclareMathOperator{\area}{\mathrm{Area}}
\DeclareMathOperator{\length}{\mathrm{Length}}
\newcommand{\Sph}{\mathbb{S}}
\newcommand{\Z}{\mathbb{Z}}
\newcommand{\del}{\partial}
\newcommand{\D}{\mathcal{D}}
\newcommand{\T}{\mathbb{T}}
\newcommand{\R}{\mathbb{R}}
\definecolor{light-gray}{gray}{.95}
\definecolor{dark-gray}{gray}{.7}
\begin{document}
\title[Second variation of eigenvalues]{Second variation formula for eigenvalue functionals on surfaces}
\author[M.~Karpukhin]{Mikhail~Karpukhin}
\date{}
\address{Department of Mathematics, University College London, 25 Gordon Street, London, WC1H 0AY, UK} \email{m.karpukhin@ucl.ac.uk}

\keywords{Eigenvalue optimisation, second variation, torus, annulus}

\subjclass[2020]{58C40, 47A75, 35P15}

\begin{abstract}
Consider the first nontrivial eigenvalue of the Laplacian on a closed surface as a functional on the space of Riemannian metrics of unit area.
N.~Nadirashvili has discovered a remarkable connection between critical points of this functional and minimal surfaces in the sphere. It was later extended by A. El Soufi and S. Ilias to cover $k$-th eigenvalues and critical points in a fixed conformal class, where the latter correspond to harmonic maps to the sphere. These results, however, only contain first order information and cannot be used to determine whether a given critical metric a local maximiser or not. In the present paper we write down the second variation formula for critical metrics and show that the flat metric on the non-rhombic torus can never be a conformal maximiser for the first eigenvalue. Analogous results are proved in the context of the Steklov eigenvalues and flat metrics on a cylinder.
\end{abstract}

\maketitle

\section{Introduction}

\subsection{Laplace eigenvalues}
Given a closed Riemannian surface $(M,g)$ we consider the eigenvalues of the Laplace-Beltrami operator
\[
0=\lambda_0(M,g)<\lambda_1(M,g)\leq\lambda_2(M,g)\leq\lambda_3(M,g)\leq\ldots\nearrow+\infty
\]
as functionals of the metric $g$. The problem of geometric optimisation of eigenvalues consists in determining the following quantities
\[
\Lambda_k(M,[g]) = \sup_{h\in [g]}\bar\lambda_k(M,h):= \sup_{h\in [g]}\lambda_k(M,h)\area(M,h);
\] 
\[
\Lambda_k(M) = \sup_g\bar\lambda_k(M):=\sup_{g}\lambda_k(M,g)\area(M,g),
\]
where $[g] = \{e^{2\omega}g|\,\,\omega\in C^\infty(M)\}$ is the conformal class of the metric $g$. These quantities are known to be finite, see~\cite{Korevaar, Has}, but finding their exact values is a challenging problem in general. The first results in this area were the computations of $\Lambda_1(\mathbb{S}^2)$ by Hersch~\cite{Hersch} and $\Lambda_1(\mathbb{RP}^2)$ by Li-Yau~\cite{LiYau}. In both cases the maximal metric is the metric of constant curvature and the arguments are substantially simplified by the fact that on $\Sph^2$ and $\mathbb{RP}^2$ there is a unique conformal class up to a diffeomorphism, which is not the case for other surfaces. A big step forward was made by Nadirashvili in his seminal work~\cite{NadirashviliT2}, where he first established the connection between the geometric eigenvalue optimisation problem and the theory of minimal surfaces. He then used it to prove that the metric on the flat equilateral torus is the unique maximiser for $\Lambda_1(\T^2)$, see also the discussion in~\cite{CKM}. The connection to minimal surfaces has been the driving force behind the recent advances in the field, both as a motivation and as a tool for computing $\Lambda_k(M)$. For an incomplete list of achievements in this area we refer to the introduction to~\cite{KKMS}.

To be more precise, in~\cite{NadirashviliT2} Nadirashvili proved a correspondence between $\bar\lambda_1$-critical metrics and minimal immersions to the sphere. In particular, he showed that the $\lambda_1$-eigenspace of a $\bar\lambda_1$-critical metric contains a collection of eigenfunctions forming a minimal homothetic immersion of the surface into the unit sphere $\Sph^n\subset\R^{n+1}$. This result was later generalised in~\cite{ESIextremal} by El Soufi and Ilias to cover higher eigenvalues and the so-called $\bar\lambda_k$-conformally critical metrics, i.e. metrics that are critical for $\bar\lambda_k$ within a fixed conformal class of metrics. It is shown that $\lambda_k$-eigenspace of a $\bar\lambda_k$-critical metric contains a collection of eigenfunctions forming a harmonic map from the surface to the unit sphere $\Sph^n\subset\R^{n+1}$, which amounts to the existence of $\lambda_k$-eigenfunctions $(u_1,\dots, u_{n+1})$ satisfying $\sum_{i=1}^{n+1}u_i^2\equiv 1$. These criticality conditions are essentially Euler-Lagrange equations for the Lipschitz functional $g\mapsto \bar\lambda_k(M,g)$ and, as a result, they do not yield any information on whether the metric in question is a maximiser or not. In the present note we write down the folklore expression for the second variation of the functional $\bar\lambda_k(M,g)$ at a critical point, see Theorem~\ref{thm:2nd_var}, and use it to show that certain known flat $\bar\lambda_1$-conformally critical metrics on $\T^2$ cannot be $\bar\lambda_1$-conformally maximal. The main downside of the second variation formula~\eqref{eq:2nd_var} is that it requires a good understanding of the full spectrum rather than just $\lambda_k$-eigenspace, which is why flat metrics prove to be particularly convenient for our purposes.

\subsection{Conformal classes on the torus}  
Despite the fact that the value of $\Lambda_1(\T^2)$ is known by the work of Nadirashvili~\cite{NadirashviliT2}, the conformal $\bar\lambda_1$-maxima on $\T^2$ are not completely understood.
It is known that the space of conformal classes on $\T^2$ is parametrised by the flat metrics $g_{a,b}$ induced by the factorisation of $\R^2$ by the lattice $\Gamma_{a,b} = \Z(1,0) + \Z(a,b)$, $a^2+b^2\geq 1$, $0\leq a\leq 1/2$. As $\lambda_1(g_{a,b})$-eigenspace always contains $\sin\left(\frac{2\pi}{b}y\right)$ and $\cos\left(\frac{2\pi}{b}y\right)$, the result of~\cite{ESIextremal} implies that all those metrics are $\bar\lambda_1$-conformally critical. Furthermore, it was proved by El Soufi, Ilias and Ros~\cite{ESIR} that if $a^2+b^2 = 1$, then $\Lambda_1(\T^2,[g_{a,b}]) = \bar\lambda_1(g_{a,b})$. At the same time, $\bar\lambda_1(g_{a,b}) = \frac{4\pi^2}{b}$, hence, for $b\geq \frac{\pi}{2}$ one has 
\[
\bar\lambda_1(g_{a,b})\leq 8\pi<\Lambda_1(\T^2,[g_{a,b}]),
\]
where the latter inequality follows from a general result of Petrides~\cite{Petrides} who proved that for $M\ne\Sph^2$ one always has $\Lambda_1(M,[g])>8\pi$. In the remaining cases $a^2+b^2 > 1$, $b<\frac{\pi}{2}$ it was not known until now whether $g_{a,b}$ is $\bar\lambda_1$-conformally maximal.

\begin{theorem} 
\label{thm:torus}
For $a^2+b^2>1$, $0\leq a\leq 1/2$ the metric $g_{a,b}$ is not $\bar\lambda_1$-conformally maximal.
\end{theorem}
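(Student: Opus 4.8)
\emph{Proof proposal.} The plan is to feed the explicit spectral data of the flat torus into the second variation formula of Theorem~\ref{thm:2nd_var} and to exhibit a single smooth conformal deformation $g_t=\bigl(1+t\phi+\tfrac{t^2}{2}\psi\bigr)g_{a,b}$ along which $\bar\lambda_1$ strictly increases, so that $g_{a,b}$ cannot even be a local $\bar\lambda_1$-conformal maximiser.

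First I would record the flat data. For $a^2+b^2>1$ and $0\le a\le\tfrac12$ the shortest vector of the dual lattice of $\Gamma_{a,b}$ is $(0,1/b)$, and it is \emph{strictly} shorter than every other nonzero dual vector; hence $\lambda_1(g_{a,b})=4\pi^2/b^2$ has multiplicity exactly $2$, with eigenspace $E=\sspan\{u_1,u_2\}$, $u_1=\sqrt{2/b}\cos(2\pi y/b)$, $u_2=\sqrt{2/b}\sin(2\pi y/b)$, and $\lambda_3(g_{a,b})>\lambda_1(g_{a,b})$. The feature special to flat tori that makes the formula usable is that a product of three first eigenfunctions has all its Fourier frequencies in $\{\pm(0,1/b),\pm(0,3/b)\}$, so that $\int_{\T^2}u\,u_iu_j\,dv_{g_{a,b}}=0$ for all $u\in E$ and $i,j\in\{1,2\}$. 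Consequently, for \emph{every} $\phi\in E$ the first-order perturbation of the whole $\lambda_1$-cluster vanishes, and (using conformal invariance of the Dirichlet energy in dimension two) Theorem~\ref{thm:2nd_var} reduces the question to the sign of the resulting quadratic form $Q_2$ on $E$, built from $\psi$ and from the reduced resolvent $(\Delta_{g_{a,b}}-\lambda_1)^{-1}$ on $E^\perp$.

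The heart of the argument is the choice $\phi=u_1\in E$. Multiplication by $u_1$ sends each $u_i$ to a function whose Fourier support lies in $\{0,\pm(0,2/b)\}$; in particular $\phi u_1$ has a nonzero \emph{constant} part. Since the constants are an eigenfunction with eigenvalue $0<\lambda_1$, this part enters the resolvent term of $Q_2$ with a \emph{positive} sign, while the remaining contribution comes only from the eigenvalue $4\lambda_1$ and enters negatively, with strictly smaller weight. A short Fourier computation gives, for $\psi=0$, the resolvent part of $Q_2$ equal to $\mathrm{diag}\bigl(\tfrac{5\lambda_1}{6b},-\tfrac{\lambda_1}{6b}\bigr)$ in the basis $(u_1,u_2)$: positive trace, but not positive definite, since the $u_2$-branch still goes down. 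I would then switch on the second-order part of the path, taking $\psi=c\cos(4\pi y/b)$: this adds to $Q_2$ the \emph{traceless} form $\mathrm{diag}\bigl(-\tfrac{\lambda_1 c}{4},\tfrac{\lambda_1 c}{4}\bigr)$, and for $c$ in the nonempty range $\bigl(\tfrac{2}{3b},\tfrac{10}{3b}\bigr)$ — for instance $c=2/b$, which gives $Q_2=\tfrac{\lambda_1}{3b}\mathrm{Id}$ — the form $Q_2$ becomes positive definite. Since $\int_{\T^2}\phi\,dv_{g_{a,b}}=\int_{\T^2}\psi\,dv_{g_{a,b}}=0$, the area stays equal to $b$ along $g_t$, so $\bar\lambda_1(g_t)=b\,\lambda_1(g_t)=\bar\lambda_1(g_{a,b})+\tfrac{\lambda_1}{3}\,t^2+o(t^2)>\bar\lambda_1(g_{a,b})$ for $0<|t|$ small, and $g_{a,b}$ is not $\bar\lambda_1$-conformally maximal.

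The step I expect to be the main obstacle — and the exact place where the hypothesis $a^2+b^2>1$ enters — is controlling the competition between the resolvent term and the $\psi$-term in $Q_2$. Both are traceless on the subspace on which $\sum_i u_i^2$ is constant, so making $Q_2>0$ requires the positive contribution of the constants to $\trace Q_2$ to outweigh the negative ones from the higher modes, after which the traceless $\psi$-form must still have enough room to lift the remaining negative branch. On the non-rhombic torus $E$ is only $2$-dimensional and the only higher mode that couples to $E$ via multiplication by $\phi\in E$ sits at $4\lambda_1$, so the bookkeeping closes in our favour; on a rhombic torus $E$ has dimension at least $4$, extra modes appear at eigenvalues below $4\lambda_1$, and the analogous computation yields $\trace Q_2<0$, consistently with the maximality known there. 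Thus the remaining work is: (i) the lattice lemma giving the multiplicity-two spectrum uniformly in $a\in[0,\tfrac12]$; (ii) specialising the formula of Theorem~\ref{thm:2nd_var} to the flat Laplacian and the above $\phi,\psi$; and (iii) checking positive definiteness of the resulting $2\times 2$ matrix — all short, since the flat Laplacian is diagonalised by Fourier series.
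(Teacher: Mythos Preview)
Your proposal is correct and shares the paper's core idea --- take the first-order conformal perturbation to be a $\lambda_1$-eigenfunction, so that multiplication by it sends $E_1$ only into the $0$- and $4\lambda_1$-eigenspaces, and then check that the positive contribution from the constants outweighs the negative one from $4\lambda_1$. The executions differ in one point worth noting. The paper works with the exponential path $e^{t\omega}g$ and simply invokes Theorem~\ref{thm:2nd_var} as stated: with $\omega=\sqrt{2}\sin(2\pi y/b)$ one finds $Q_\omega=\mathrm{diag}(1/6,-5/6)$, hence $\mu=-5/6$, and the \emph{area term} $\|\omega\|_{L^2}^2=1$ built into the formula already gives $\alpha=\lambda_1(1-5/6)=\lambda_1/6>0$ --- no second-order correction is needed. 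You instead keep the area fixed along $(1+t\phi+\tfrac{t^2}{2}\psi)g$, which forces you to introduce the extra $\psi=c\cos(4\pi y/b)$ to lift the negative branch of the resolvent form; this works (your diagonal computations and the range $c\in(2/(3b),10/(3b))$ are right), but it requires a mild extension of Theorem~\ref{thm:2nd_var} to paths with a prescribed second-order term $\psi$, since the theorem as written is only for $e^{t\omega}g$. That extension is routine (the $\psi$-term adds $-\lambda_1\int\psi u^2$ to the quadratic form, exactly as you use), so there is no real gap; the paper's route is just shorter because the area derivative in $\bar\lambda_1$ plays the role of your $\psi$ for free.
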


This theorem is proved by choosing an appropriate perturbation for which the second variation is positive. The condition $a^2+b^2>1$ ensures that $\lambda_1(g_{a,b})$-eigenspace is $2$-dimensional, which has a significant effect on the second variation formula~\eqref{thm:2nd_var}.
Finally, we remark that the numerical computations in~\cite{KOO} suggest that for all $a^2+b^2 > 1$ the $\bar\lambda_1$-conformally maximal metric is rotationally symmetric in the direction of the vector $(a,b)$.

\subsection{Steklov eigenvalues}
Let $(N,g)$ be a surface with boundary. The number $\sigma$ is called a Steklov eigenvalue if there exists a nontrivial solution of the following equation
\begin{equation*}
\begin{cases}
\Delta u = 0\quad&\text{ on $N$};\\
\del_n u = \sigma u\quad&\text{ on $\del N$}.
\end{cases}
\end{equation*}
The Steklov eigenvalues form a sequence
\[
0=\sigma_0(N,g)<\sigma_1(N,g)\leq\sigma_2(N,g)\leq\sigma_3(N,g)\leq\ldots\nearrow+\infty.
\]
For a recent review of various aspects of spectral geometry of Steklov eigenvalues we refer to~\cite{CGGS}.
In the present paper we consider the following normalised quantities
\[
\Sigma_k(N,[g]) = \sup_{h\in [g]}\bar\sigma_k(N,h):= \sup_{h\in [g]}\sigma_k(N,h)\length(\del N,h);
\] 
\[
\Sigma_k(N) = \sup_g\bar\sigma_k(N):=\sup_{g}\sigma_k(N,g)\length(\del N,g).
\]
The geometric optimisation problem refers to determining the exact value of these quantities. In this context Steklov eigenvalues are often seen as counterparts of Laplace eigenvalues for surfaces with boundary. One reason is that there are a lot of empirical similarities between the two problems as one can often prove analogous results even if the proofs do not always carry over. The more important reason is that critical metrics for Steklov eigenvalues also correspond to minimal surfaces albeit in a different context. In line with these analogies, in the present paper we study the second variation for Steklov-critical metrics and study the maximality of flat metrics on cylinders.
 
 More specifically, it is proved by Fraser and Schoen in~\cite{FS:extremal} that a metric $g$ on $N$ is $\bar\sigma_k$-conformally critical if there exists a collection of $\sigma_k$-eigenfunctions $(u_1,\ldots,u_{n})$ satisfying $\sum_{i=1}^n u_i^2\equiv 1$ on $\del N$. Once again, this condition does not tell us anything about whether $g$ is a maximiser or not. Our first result is Theorem~\ref{thm:2nd_var_St}, where we obtain formula~\eqref{eq:2nd_var_St} for the second variation of  $\bar\sigma_k$. This formula once again involves the complete Steklov spectrum of $g$ and, thus, is difficult to use in general. However, as for the flat metrics on a torus, in the Steklov case flat metrics on a cylinder represent a convenient situation, where formula~\eqref{eq:2nd_var_St} can be effectively applied.

\subsection{Conformal classes on the cylinder} The space of  conformal classes on the annulus $\mathbb{A}$ can be parametrised by the flat metrics $g_T$ on $\mathbb{S}^1\times [-T,T]$, where $\mathbb{S}^1$ is a circle of length $2\pi$. Fraser and Schoen proved in~\cite{FS} that $\Sigma_1(\mathbb{A})$ is achieved on $g_{T_1}$, where $T_1\approx 1.2$ is the unique positive solution of $\coth(T) = T$. In particular, this implies that $\Sigma_1(\mathbb{A}) = \Sigma_1(\mathbb{A},[g_{T_1}]) = \bar\sigma_1(\mathbb{A}, g_{T_1})$, but the value of $\Sigma_1(\mathbb{A},[g_{T}])$ is unknown for $T\ne T_1$. In~\cite{KM} it is conjectured that $T>T_1$ this value is achieved on an explicitly described rotationally symmetric metric, whereas for $T<T_1$ the $\bar\sigma_1$-conformal maximisers are not rotationally symmetric. In this paper we confirm the latter statement. For $T<T_1$ the $\sigma_1(\mathbb{A},g_T)$-eigenspace is spanned by $\sin\theta\sinh t$ and $\cos\theta\sinh t$, which clearly implies that $g_T$ is $\bar\sigma$-conformally critical. 
Moreover, in~\cite{KM} it is proved that $g_T$ are in fact the only rotationally symmetric $\bar\sigma_1$-conformally critical metrics for $T<T_1$. 

\begin{theorem}
\label{thm:cylinder}
 Let $T_1$ be the unique solution of $\coth T = T$. Then for $T<T_1$ the flat metric $g_T$ on $\mathbb{S}^1\times [-T,T]$ is not $\bar\sigma_1$-conformally maximal. In particular, for such $T$ there are no rotationally symmetric $\bar\sigma_1$-conformally maximal metrics.
\end{theorem}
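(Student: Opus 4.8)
The plan is to produce a conformal deformation of $g_T$ along which the second variation of $\bar\sigma_1$ is strictly positive; since a $\bar\sigma_1$-conformal maximiser is in particular a conformal local maximum, this already rules out $g_T$ being $\bar\sigma_1$-conformally maximal. One begins by recording the complete Steklov spectrum of $g_T$: with $(\theta,t)$ the coordinates on $\mathbb{S}^1\times[-T,T]$, separation of variables produces the harmonic functions $1$, $t$, and $\cos k\theta\cosh kt$, $\sin k\theta\cosh kt$, $\cos k\theta\sinh kt$, $\sin k\theta\sinh kt$ ($k\ge 1$), which are Steklov eigenfunctions of eigenvalue $0$, $1/T$, and (each with multiplicity two) $k\tanh kT$ and $k\coth kT$ respectively. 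Since for $T<T_1$ one has $\tanh T<1/T$ (the two sides coincide at $T_1$, where $\coth T_1=T_1$, and $\tanh T-1/T$ is increasing in $T$), it follows that $\sigma_1(\mathbb{A},g_T)=\tanh T$ with two-dimensional eigenspace $E=\operatorname{span}\{\cos\theta\cosh t,\ \sin\theta\cosh t\}$; restricted to $\partial\mathbb{A}$ these are $\cos\theta$ and $\sin\theta$, and $\cos^2\theta+\sin^2\theta\equiv1$, so $g_T$ is $\bar\sigma_1$-conformally critical. Flatness is the point here: every Steklov eigenfunction is explicit, so the series over the spectrum entering~\eqref{eq:2nd_var_St} can be summed in closed form.

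Next one evaluates~\eqref{eq:2nd_var_St} on the conformal variation whose boundary conformal factor equals $\cos\theta$ on both components of $\partial\mathbb{A}$. This direction is first-order neutral, because $\cos\theta$ is $L^2$-orthogonal on each boundary circle to $1$, $\cos2\theta$ and $\sin2\theta$: thus the matrix $\big(\int_{\partial\mathbb{A}}\cos\theta\,u_iu_j\big)$ governing the first-order splitting of $\sigma_1$ inside $E$ vanishes and the first variation of $\bar\sigma_1$ is zero. Now $\cos\theta$ times a Steklov eigenfunction, restricted to $\partial\mathbb{A}$, has a nonzero $L^2(\partial\mathbb{A})$-component in $\operatorname{span}(E)$ only for the constant eigenfunction (eigenvalue $0$) and for $\cos2\theta\cosh2t$, $\sin2\theta\cosh2t$ (eigenvalue $2\tanh2T$), so only those two eigenvalues contribute to~\eqref{eq:2nd_var_St} and every other term drops out. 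Computing the surviving pairings, the formula evaluates to a positive multiple of
\[
\frac{1}{\tanh T}-\frac{1}{2\tanh2T-\tanh T}=\frac{2(\tanh2T-\tanh T)}{\tanh T\,(2\tanh2T-\tanh T)},
\]
which is strictly positive since $\tanh2T>\tanh T>0$ (whence also $2\tanh2T-\tanh T>0$). Hence the second variation of $\bar\sigma_1$ at $g_T$ in this direction is positive, so $g_T$ is not a conformal local maximum of $\bar\sigma_1$, and a fortiori not $\bar\sigma_1$-conformally maximal.

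The final assertion is then immediate: a rotationally symmetric $\bar\sigma_1$-conformally maximal metric would be $\bar\sigma_1$-conformally critical and rotationally symmetric, hence one of the metrics $g_{T'}$ up to scaling by the classification of~\cite{KM}, contradicting what has just been proved.

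I expect the real obstacle to be the two-dimensionality of $E$: raising $\bar\sigma_1$ requires pushing up \emph{both} branches of the splitting eigenvalue at once, and the straight ray $\tau\mapsto e^{2\tau\cos\theta}g_T$ fails to do this once $T$ is near $T_1$ --- there the branch second derivatives are governed by the sign of $2\tanh2T-3\tanh T$, which becomes negative. The way around this is precisely the multiplicity-two shape of~\eqref{eq:2nd_var_St}: allowing the conformal factor a second-order correction that cancels the $\theta$-dependence produced by the splitting of $E$ leaves exactly the $\theta$-averaged coupling displayed above, and that coupling is positive for \emph{all} $T$. Consequently the hypothesis $T<T_1$ is used only to fix $\dim E=2$ and $\sigma_1=\tanh T$, and once that is in place positivity is the elementary hyperbolic inequality $\tanh2T>\tanh T$.
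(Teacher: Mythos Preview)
Your direct application of Theorem~\ref{thm:2nd_var_St} with $\omega=\cos\theta$ does not yield what you claim. The quantity $\alpha$ in~\eqref{eq:2nd_var_St} involves the \emph{smallest} eigenvalue $\mu$ of $Q_\omega$, not any average. With $u_1=\cos\theta$, $u_2=\sin\theta$ on both boundary circles one finds $\omega u_1=\tfrac12(1+\cos2\theta)$, $\omega u_2=\tfrac12\sin2\theta$, and hence
\[
Q_\omega(u_1,u_1)=\pi-\tfrac{\pi}{2}b_2(T),\qquad Q_\omega(u_2,u_2)=-\tfrac{\pi}{2}b_2(T),\qquad Q_\omega(u_1,u_2)=0,
\]
where $b_2(T)=\dfrac{2\tanh2T+\tanh T}{2\tanh2T-\tanh T}$. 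Thus $\mu=-b_2(T)/4$ and $\alpha=\pi\sigma_1\bigl(2-b_2(T)\bigr)$, which has the sign of $2\tanh2T-3\tanh T$. This is negative once $T>\operatorname{arctanh}(1/\sqrt3)\approx0.658$, so the straight ray $e^{2t\cos\theta}g_T$ fails to increase $\bar\sigma_1$ on the interval $(0.658,T_1)$. You acknowledge exactly this in your closing paragraph, but that means the body of the argument --- where you assert that ``the formula evaluates to a positive multiple of'' your displayed expression --- is simply wrong as written: the displayed expression is (a positive multiple of) $3-b_2(T)$, i.e.\ the \emph{trace} of $Q_\omega$ plus the length term, not the quantity Theorem~\ref{thm:2nd_var_St} produces.

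Your proposed repair --- inserting a second-order correction in the conformal factor to equalise the two branches --- is correct in principle and does lead to the averaged coupling you wrote down: taking $\rho_t=t\cos\theta+\tfrac{t^2}{4}\cos2\theta$ makes both analytic branches of $\bar\sigma_1$ along $e^{2\rho_t}g_T$ share the second derivative $\pi\sigma_1\bigl(3-b_2(T)\bigr)>0$ for every $T$. However, this lies outside Theorem~\ref{thm:2nd_var_St}, which only treats linear families $e^{2t\omega}g$; you would have to rederive the second-order expansion for curved one-parameter families and actually perform the computation, neither of which you do. The paper instead stays within Theorem~\ref{thm:2nd_var_St} by enriching the \emph{first}-order direction to $\omega=\sin\theta-a\sin3\theta$ with $a\approx0.2$: the extra $\sin3\theta$ term couples to the $k=4$ eigenspace, and for this value of $a$ both eigenvalues of $Q_\omega$ are large enough that $\alpha>0$ throughout $T<T_1$. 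That route avoids curved families at the price of some explicit numerical bounds on $b_2(T)$ and $b_4(T)$.
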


The theorem is proved by choosing a particular perturbation for which the second variation is positive. The condition $T<T_1$ ensures that the $\sigma_1(g_T)$-eigenspace is $2$-dimensional.  

\subsection{Discussion} Theorems~\ref{thm:2nd_var} and~\ref{thm:2nd_var_St} appear to be folklore results, but we were not able to find an appropriate reference for them in the context of general background metrics. It would be natural to assume that the second variation formulae (or at least its rough form and the way to obtain them) are known to members of the community, who were in turn deterred from pursuing them further by the dependence of the formulae on the full spectrum of the metric. At the very least this can certainly be said about the author of the present article. Our main contribution here is showing that the second variation can be useful. We hope that the present paper will inspire other researchers to take a closer look and find other applications, possibly to proving local maximality properties of the critical metrics. 

We conclude with the remark that there is an alternative approach to stability properties of critical metrics based on the connection to harmonic maps and minimal surfaces, see~\cite{KNPS}. It connects the second variations of energy and area respectively to the local stability properties of critical metrics. In the form presented in~\cite{KNPS} it cannot be used to prove non-maximality of a critical point, but it is well-suited for proving local maximality. It would be interesting to see if the combination of the two approaches could lead to the improved understanding of the local behaviour of eigenvalue functional near critical points. 

\subsection*{Acknowledgements} A huge thanks goes to Nikolai Nadirashvili for his hospitality, support and guidance during the author's early career stages.The author is grateful to the referees for the valuable comments on the initial version of the manuscript.

\section{Second variation for the Laplace eigenvalues}
\begin{theorem}
\label{thm:2nd_var}
Let $E_k$ denote the $\lambda_k$-eigenspace. Suppose that $\omega\in C^\infty(M)$ is such that 
\begin{equation}
\label{eq:2nd_var_cond}
\int_M\omega uv\,dv_g = 0
\end{equation}
for any $u,v\in E_k$. If $\lambda_{k-1}<\lambda_k$, then 
\begin{equation}
\label{eq:2nd_var}
\lambda_k(e^{t\omega}g) = \lambda_k(g) + \alpha t^2 + o(t^2)
\end{equation}
as $t\to 0$, where $\alpha =\lambda_k( \|\omega\|^2_{L^2} + \mu\area(M,g))$ and $\mu$ is the smallest eigenvalue of the following quadratic form on $E_k$
\[
Q_{\omega}(u,u) = - \sum_{\lambda_i\ne\lambda_k}\frac{\lambda_i+\lambda_k}{\lambda_i-\lambda_k}\|\Pi_i(\omega u)\|^2_{L^2},
\]  
where $\Pi_i$ is the $L^2$-orthogonal projection onto $\lambda_i$-eigenspace and the sum on the r.h.s. is over distinct eigenvalues of $\Delta_g$.
\end{theorem}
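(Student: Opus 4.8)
The plan is to realise $\lambda_k(e^{t\omega}g)$ as the eigenvalues of an analytic family of self-adjoint operators on a \emph{fixed} Hilbert space and then run degenerate second order perturbation theory, using the hypothesis \eqref{eq:2nd_var_cond} twice: once to kill the first order term and once to collapse the second order term into $Q_\omega$. Concretely, in dimension two the Dirichlet energy is conformally invariant, so for $u\in H^1(M)$ one has $\int_M|\nabla u|^2_{e^{t\omega}g}\,dv_{e^{t\omega}g}=\int_M|\nabla u|^2_g\,dv_g$ while $dv_{e^{t\omega}g}=e^{t\omega}\,dv_g$; hence $\lambda_k(e^{t\omega}g)$ is the $k$-th eigenvalue of the generalised problem $\Delta_g u=\lambda\,e^{t\omega}u$, and substituting $u=e^{-t\omega/2}v$ conjugates this to $B_tv=\lambda v$ with $B_t:=M_{e^{-t\omega/2}}\Delta_g M_{e^{-t\omega/2}}$ self-adjoint on $L^2(M,dv_g)$ and $B_0=\Delta_g$. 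Since $\omega$ is smooth, $M_{e^{-t\omega/2}}$ is a $t$-analytic bounded isomorphism of $H^2(M)$, so $\{B_t\}$ is a holomorphic family of type (A) and one may expand $B_t=\Delta_g+tB_1+t^2B_2+O(t^3)$ with $B_1=-\tfrac12(M_\omega\Delta_g+\Delta_gM_\omega)$ and an analogous explicit $B_2$.

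Next I would invoke analytic perturbation theory around $\lambda_k$. Because $\lambda_{k-1}<\lambda_k$, for small $t$ the eigenvalues that converge to $\lambda_k$ form a group of $d=\dim E_k$ eigenvalues isolated from the rest of the spectrum, with $\lambda_k(e^{t\omega}g)$ their minimum; writing $\widehat B_t$ for the compression of $B_t$ to this group transported back to $E_k$ by the Kato parallel transport, $\widehat B_t$ is an analytic family of symmetric operators on $E_k$ whose eigenvalue branches are precisely these $d$ eigenvalues, so it suffices to Taylor-expand $\widehat B_t=\lambda_k+t\widehat B_1+t^2\widehat B_2+O(t^3)$ and read off the least eigenvalue of $\widehat B_2$. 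One computes $\langle\widehat B_1u,u\rangle=\langle B_1u,u\rangle=-\lambda_k\int_M\omega u^2\,dv_g$, which vanishes for all $u\in E_k$ by \eqref{eq:2nd_var_cond}; hence $\widehat B_1=0$, and moreover $\Pi_k(\omega u)=0$ for every $u\in E_k$.

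With $\widehat B_1=0$ the standard second order formula gives $\widehat B_2=\Pi_kB_2\Pi_k-\Pi_kB_1\,(\Delta_g-\lambda_k)^{-1}(1-\Pi_k)\,B_1\Pi_k$, and here the component along $E_k$ of the first order eigenfunction — which the scheme leaves undetermined — drops out precisely because $\int_M\omega\varphi\psi\,dv_g=0$ for all $\varphi,\psi\in E_k$. Using $\Delta_gu=\lambda_ku$ and $\Pi_k(\omega u)=0$, a short computation expanding $\omega u=\sum_{\lambda_i\ne\lambda_k}\Pi_i(\omega u)$ over distinct eigenvalues collapses both terms into $\langle\widehat B_2u,u\rangle=\tfrac{\lambda_k}{2}Q_\omega(u,u)$, so the least eigenvalue of $\widehat B_2$ is $\tfrac{\lambda_k}{2}\mu$ and $\lambda_k(e^{t\omega}g)=\lambda_k(g)+\tfrac{\lambda_k}{2}\mu\,t^2+o(t^2)$. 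The form of $\alpha$ in \eqref{eq:2nd_var} then follows by multiplying through by $\area(e^{t\omega}g)=\area(M,g)+\tfrac12\|\omega\|^2_{L^2}\,t^2+o(t^2)$, i.e.\ by passing to the area-normalised eigenvalue (the functional for which \eqref{eq:2nd_var_cond} already forces the first order term to vanish), so that the $\|\omega\|^2_{L^2}$ contribution is exactly the one coming from the expansion of the area.

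The routine algebra — the explicit form of $B_2$ and the collapse to $Q_\omega$ — is not the difficulty; the main obstacle is the careful handling of the degenerate perturbation theory: checking that $\{B_t\}$ genuinely is a holomorphic family of type (A) with $t$-independent operator domain $H^2(M)$, that the $\lambda_k$-cluster is isolated so that both the Riesz projection and $\widehat B_t$ are analytic, that $\lambda_k(e^{t\omega}g)$ is the minimal branch up to $o(t^2)$, and — most importantly — that the undetermined $E_k$-component of the first order eigenfunction really drops out of the second order coefficient, which is the second, and crucial, use of \eqref{eq:2nd_var_cond}.
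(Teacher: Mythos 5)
Your proposal is correct, and I checked the key collapse: with $B_1=-\tfrac12(M_\omega\Delta_g+\Delta_gM_\omega)$, $B_2=\tfrac18(M_{\omega^2}\Delta_g+\Delta_gM_{\omega^2})+\tfrac14M_\omega\Delta_gM_\omega$ and $\Pi_k(\omega u)=0$, one indeed gets $\langle\widehat B_2u,u\rangle=\tfrac{\lambda_k}{2}Q_\omega(u,u)$, so the argument goes through. The route differs from the paper's in execution rather than substance: the paper also relies on Kato's analytic perturbation theory and conformal covariance, but it works directly with $\Delta_{g_t}=e^{-t\omega}\Delta_g$ along the analytic branches $(\lambda_j(t),u_j(t))$, differentiates the eigenvalue equation twice, computes the first variation of the eigenfunctions, $\dot u_j=\sum_{\lambda_i\ne\lambda_k}\tfrac{\lambda_k}{\lambda_i-\lambda_k}\Pi_i(\omega u_j)$, and then reads off that the second derivatives $\ddot\lambda_j$ are the eigenvalues of the form $\lambda_kQ_\omega$ on $E_k$. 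Your symmetrisation $B_t=e^{-t\omega/2}\Delta_g e^{-t\omega/2}$ buys a self-adjoint holomorphic family of type (A) on the fixed space $L^2(M,dv_g)$ with constant domain $H^2(M)$, so the textbook reduced-operator formula applies verbatim and the issue you rightly single out --- that the undetermined $E_k$-component of the first-order eigenfunction must drop out --- is handled automatically; the paper's version is shorter but leaves this bookkeeping (and the applicability of Kato's theory to the non-symmetrised family) implicit, citing El Soufi--Ilias. Two harmless normalisation remarks to align with the paper: your Taylor coefficient $\tfrac{\lambda_k}{2}\bigl(\mu\area(M,g)+\|\omega\|^2_{L^2}\bigr)$ is half of the stated $\alpha$, because the paper identifies $\alpha$ with the full second derivative $\ddot{\bar\lambda}_k$ rather than with the $t^2$-coefficient (only the sign matters in the applications); and both you and the paper suppress the linear term of $\area(e^{t\omega}g)$, namely $t\int_M\omega\,dv_g$, which condition \eqref{eq:2nd_var_cond} alone does not kill --- so the expansion is really for $\bar\lambda_k$ under the additional (and, in the applications, satisfied) normalisation $\int_M\omega\,dv_g=0$; this is a looseness of the statement itself, not a defect of your argument relative to the paper's proof.
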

\begin{proof}
Suppose that  $\lambda_{k-1}(g)<\lambda_k(g)=\ldots=\lambda_{k+m}(g)<\lambda_{k+m+1}(g)$.  
It is shown in~\cite{ESIextremal} that we can apply the perturbation theory of Kato~\cite{Kato} to the situation at hand. In particular, perturbing $g = g_0$ as $g_t = e^{\omega t}g$ the $m+1$-dimensional $\lambda_k(g)$-eigenspace splits into a collection of analytic branches of eigenvalues and eigenfunctions, which we denote by $(\lambda_j(t), u_j(t))$. Note that in general it is not possible to enumerate these branches so that  $\lambda_j(t)$ is the $j$-th eigenvalue of $g_t$ for all values of $t$, which is the case for example if the first order terms in the asymptotic expansions of $\lambda_j(t)$ do not vanish. At the same time, according to the computations of~\cite{ESIextremal}, the condition~\eqref{eq:2nd_var_cond} implies that the first order term in the expansion of $\lambda_j(t)$ does indeed vanish.

To compute the second order term in $\lambda_j(t)$ we begin by computing the first order term in the expansion of $u_j(t)$. To do that we differentiate $\Delta_{g_t}u_j(t) = \lambda_j(t) u_j(t)$ at $t=0$ and use the conformal covariance property $\Delta_{g_t} = e^{-\omega t}\Delta_g$. Thus, using that $\dot \lambda_j =0$, we obtain
\[
-\lambda_k\omega u_j + \Delta_g \dot u_j = \lambda_k \dot u_j.
\]
Taking the projection of this equality onto $\lambda_i$-eigenspace $\lambda_i\ne\lambda_k$ one obtains
\[
-\lambda_k\Pi_i(\omega u_j) + \lambda_i\Pi_i(\dot u_j) = \lambda_k\Pi_i(\dot u_j)
\]
or, equivalently,
\[
\dot u_j = \sum_{\lambda_i\ne\lambda_k}\frac{\lambda_k}{\lambda_i-\lambda_k}\Pi_i(\omega u_j),
\]
where the summation is over {\em distinct} eigenvalues of $\Delta_g$. Note that $\Pi_k(\omega u_j)= 0$ by~\eqref{eq:2nd_var_cond}.

Taking two derivatives of $\Delta_{g_t}u_j(t) = \lambda_j(t) u_j(t)$ at $t=0$  and using $\dot \lambda_j =0$, we obtain
\[
\lambda_k\omega^2 u_j - 2\omega \sum_{\lambda_i\ne\lambda_k}\frac{\lambda_i\lambda_k}{\lambda_i - \lambda_k}\Pi_i(\omega u_j) + \Delta_g \ddot u_j = \ddot\lambda_j u_j + \lambda_k\ddot u_j.
\]
Taking the projection onto $\lambda_k$-eigenspace we find that $u_j$ is $\ddot\lambda_j$-eigenvector of an operator
\[
u_j\mapsto \lambda_k\left(\Pi_k(\omega^2 u_j) - 2\sum_{\lambda_i\ne\lambda_k}\frac{\lambda_i}{\lambda_i-\lambda_k}\Pi_k(\omega\Pi_i(\omega u_j))\right),
\]
defined on the $\lambda_k$-eigenspace. The associated quadratic form is 
\[
\lambda_kQ_\omega(u,u) = \lambda_k\left(\|\omega u\|^2_{L^2} - 2\sum_{\lambda_i\ne\lambda_k}\frac{\lambda_i}{\lambda_i-\lambda_k} \|\Pi_i(\omega u)\|^2_{L^2} \right)
\]
and using Parseval's identity and the fact that~\eqref{eq:2nd_var_cond} implies $\Pi_k(\omega u) = 0$ for any $\lambda_k$-eigenfunction $u$, we arrive at
\[
Q_\omega(u,u) = - \sum_{\lambda_i\ne\lambda_k}\frac{\lambda_i+\lambda_k}{\lambda_i-\lambda_k}\|\Pi_i(\omega u)\|^2_{L^2}.
\]
In particular, if $\lambda_{k-1}<\lambda_k$, then $\ddot\lambda_k = \lambda_k\mu$, where $\mu$ is the smallest eigenvalue of $Q_\omega$.

 It remains to compute the second derivative of $\bar\lambda_k(t) = \lambda_k(t)\area(M,g_t)$. Since $\dot\lambda_k=0$, this derivative equals
 \[
\ddot\lambda_k\area(M,g) + \lambda_k\ddot{\area}(M,g) = \lambda_k(\mu\area(M,g) + \|\omega\|_{L^2}^2)
 \]
as claimed.
\end{proof}

Let us now turn to the proof of Theorem~\ref{thm:torus}.

%
\begin{proof}[Proof of Theorem~\ref{thm:torus}]
To prove the theorem we will find $\omega$ satisfying the assumptions of Theorem~\ref{thm:2nd_var} for which $\alpha$ in equation~\eqref{eq:2nd_var} is positive. 

It is more convenient to work with the unit area metric $g = \frac{1}{\sqrt{b}}g_{a,b}$. Set $\omega = \sqrt{2}\sin\left(\frac{2\pi}{b}y\right)$, so that $\|\omega\|_{L^2(g)} = 1$. Then the functions $u_1 = \sqrt{2}\sin\left(\frac{2\pi}{b}y\right)$ and $u_2 = \sqrt{2}\cos\left(\frac{2\pi}{b}y\right)$ form an orthonormal basis of $\lambda_1$-eigenspace (the assumption $a^2+b^2>1$ is used here) and one has 
\begin{align*}
&\omega u_1 =  2\sin^2\left(\frac{2\pi}{b}y\right) = 1 - \frac{1}{\sqrt 2}\left(\sqrt{2}\cos\left(\frac{4\pi}{b}y\right)\right); \\
&\omega u_2 = 2\sin\left(\frac{2\pi}{b}y\right)\cos\left(\frac{2\pi}{b}y\right) = \frac{1}{\sqrt 2}\left(\sqrt{2}\sin\left(\frac{4\pi}{b}y\right)\right),
\end{align*}
where $\sqrt{2}\sin\left(\frac{4\pi}{b}y\right)$ and $\sqrt{2}\cos\left(\frac{4\pi}{b}y\right)$ are orthonormal functions in the $4\lambda_1$-eigenspace. Hence, one has that $\omega u_i$ is orthogonal to $E_1$ for $i=1,2$ and
\begin{align*}
&Q_\omega(u_1,u_1) = 1 - \frac{4\lambda_1+\lambda_1}{4\lambda_1-\lambda_1}\cdot\frac{1}{2} = \frac{1}{6};\\
&Q_\omega(u_1,u_2) = 0;\\
&Q_\omega(u_2,u_2) = - \frac{4\lambda_1+\lambda_1}{4\lambda_1-\lambda_1}\cdot\frac{1}{2} = -\frac{5}{6}.
\end{align*}
Hence $\mu = -5/6$ and $\alpha = \lambda_1/6>0$.\qedhere

\end{proof}

\section{Second variation for the Steklov eigenvalues}

We recall that the Steklov eigenvalues coincide with the eigenvalues of the Dirichlet-to-Neumann map $\D_g\colon C^\infty(N)\to C^\infty(N)$ given by
\[
\D_g(u) = \partial_n(\hat u),
\]
where $\hat u$ is the unique harmonic extension of $u$ to the interior on $N$. It is known that $\D_g$ is a non-negative elliptic pseudodifferential operator of order $1$, see~\cite{CGGS}. It is also easy to see that the operator is conformally covariant, $\D_{e^{2\omega}g} = e^{-\omega}\D_g$.

\begin{theorem}
\label{thm:2nd_var_St}
Let $E_k$ denote the $\sigma_k$-eigenspace. Suppose that $\omega\in C^\infty(M)$ is such that 
\[
\int_{\bd N}\omega uv\,dv_g = 0
\]
for any $u,v\in E_k$. If $\sigma_{k-1}<\sigma_k$, then 
\begin{equation}
\label{eq:2nd_var_St}
\sigma_k(e^{2t\omega}g) = \sigma_k(g) + \alpha t^2 + o(t^2)
\end{equation}
as $t\to 0$, where $\alpha =\sigma_k( \|\omega\|^2_{L^2(\bd N)} + \mu\length(\bd N,g))$ and $\mu$ is the smallest eigenvalue of the following quadratic form on $E_k$
\[
Q_{\omega}(u,u) = - \sum_{\sigma_i\ne \sigma_k}\frac{\sigma_i+\sigma_k}{\sigma_i-\sigma_k}\|\Pi_i(\omega u)\|^2_{L^2},
\]  
where $\Pi_i$ is the $L^2$-orthogonal projection onto $\sigma_i$-eigenspace and the sum on the r.h.s. is over distinct eigenvalues of the Dirichlet-to-Neumann map.
\end{theorem}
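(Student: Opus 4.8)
The plan is to repeat the proof of Theorem~\ref{thm:2nd_var} essentially verbatim, with the Laplace--Beltrami operator $\Delta_g$ replaced by the Dirichlet-to-Neumann operator $\D_g$, the measure $dv_g$ on $M$ by the induced boundary measure $dv_g$ on $\bd N$, the area by $\length(\bd N,\cdot)$, and the conformal factor $e^{t\omega}$ by $e^{2t\omega}$. What makes this legitimate is the conformal covariance $\D_{e^{2t\omega}g}=e^{-t\omega}\D_g$, which plays exactly the role that $\Delta_{e^{t\omega}g}=e^{-t\omega}\Delta_g$ did in the closed case, together with the fact --- established by Fraser and Schoen in~\cite{FS:extremal} in the course of deriving the first-order criticality condition --- that Kato's analytic perturbation theory applies to the family $\D_{g_t}$ with $g_t=e^{2t\omega}g$. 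Thus the $\sigma_k(g)$-eigenspace splits into analytic branches $(\sigma_j(t),u_j(t))$, and, as in~\cite{FS:extremal}, the hypothesis $\int_{\bd N}\omega uv\,dv_g=0$ for $u,v\in E_k$ forces every first-order term $\dot\sigma_j(0)$ to vanish.

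Granting this, I would first compute $\dot u_j:=\dot u_j(0)$. Differentiating $\D_{g_t}u_j(t)=\sigma_j(t)u_j(t)$ at $t=0$, using $\D_{g_t}=e^{-t\omega}\D_g$ and $\dot\sigma_j=0$, gives $-\sigma_k\omega u_j+\D_g\dot u_j=\sigma_k\dot u_j$; here one fixes the branch by normalising $u_j(t)$ in $L^2(\bd N,dv_{g_t})$ and requiring $\dot u_j\perp E_k$ in $L^2(\bd N,dv_g)$, which is consistent with the equation because differentiating the normalisation identity and using the hypothesis gives $\int_{\bd N}u_j\dot u_j\,dv_g=0$. Projecting onto the $\sigma_i$-eigenspace of $\D_g$ for $\sigma_i\ne\sigma_k$ yields $\Pi_i(\dot u_j)=\frac{\sigma_k}{\sigma_i-\sigma_k}\Pi_i(\omega u_j)$, while $\Pi_k(\omega u_j)=0$ by hypothesis, so $\dot u_j=\sum_{\sigma_i\ne\sigma_k}\frac{\sigma_k}{\sigma_i-\sigma_k}\Pi_i(\omega u_j)$, the sum over distinct Dirichlet-to-Neumann eigenvalues and convergent since $\omega u_j\in C^\infty(\bd N)$.

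Next I would differentiate $\D_{g_t}u_j(t)=\sigma_j(t)u_j(t)$ twice at $t=0$, again using $\dot\sigma_j=0$, to get
\[
\sigma_k\omega^2 u_j-2\omega\sum_{\sigma_i\ne\sigma_k}\frac{\sigma_i\sigma_k}{\sigma_i-\sigma_k}\Pi_i(\omega u_j)+\D_g\ddot u_j=\ddot\sigma_j u_j+\sigma_k\ddot u_j .
\]
Projecting onto $E_k$ cancels the $\D_g\ddot u_j$ and $\sigma_k\ddot u_j$ contributions and exhibits $u_j$ as a $\ddot\sigma_j$-eigenvector of the operator $u\mapsto\sigma_k\big(\Pi_k(\omega^2 u)-2\sum_{\sigma_i\ne\sigma_k}\frac{\sigma_i}{\sigma_i-\sigma_k}\Pi_k(\omega\Pi_i(\omega u))\big)$ on $E_k$, whose quadratic form $\sigma_k\big(\|\omega u\|_{L^2}^2-2\sum_{\sigma_i\ne\sigma_k}\frac{\sigma_i}{\sigma_i-\sigma_k}\|\Pi_i(\omega u)\|_{L^2}^2\big)$ collapses, by Parseval's identity and $\Pi_k(\omega u)=0$, to $\sigma_k Q_\omega(u,u)$ with $Q_\omega$ as in the statement. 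Since $\sigma_{k-1}<\sigma_k$, for small $t$ the function $\sigma_k(g_t)$ coincides with the smallest of these branches, whose second derivative is $\sigma_k$ times the smallest eigenvalue $\mu$ of $Q_\omega$, so $\ddot\sigma_k=\sigma_k\mu$. Finally, differentiating $\bar\sigma_k(t)=\sigma_k(t)\length(\bd N,g_t)$ twice at $t=0$, using $\dot\sigma_k=0$ and $\length(\bd N,g_t)=\int_{\bd N}e^{t\omega}\,dv_g$ so that $\frac{d^2}{dt^2}\big|_{t=0}\length(\bd N,g_t)=\|\omega\|^2_{L^2(\bd N)}$, one reads off $\alpha=\sigma_k\big(\|\omega\|^2_{L^2(\bd N)}+\mu\length(\bd N,g)\big)$, exactly as in the closed case.

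The one genuinely new point is functional-analytic and lives at the very start: $\D_g$ is a first-order pseudodifferential operator rather than a differential operator, and both the symbol of $\D_{g_t}$ and the boundary measure with respect to which it is symmetric vary with $t$, so one must check that $\{\D_{g_t}\}$ --- equivalently its conjugate $e^{-t\omega/2}\D_g\,e^{-t\omega/2}$ on the fixed space $L^2(\bd N,dv_g)$ --- is a self-adjoint holomorphic family of type (A) in Kato's sense, with constant domain $H^1(\bd N)$ and discrete spectrum. Since this is already carried out in~\cite{FS:extremal}, I would cite it and then present the remaining computation as ``mutatis mutandis'' under the dictionary above, pausing only where the pseudodifferential nature of $\D_g$ (convergence of the spectral sums; smoothness of $\dot u_j$ by elliptic regularity for $\D_g$) asks for a word of justification.
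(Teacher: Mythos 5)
Your proposal is correct and follows exactly the route the paper intends: the paper's own proof of Theorem~\ref{thm:2nd_var_St} simply says it is analogous to Theorem~\ref{thm:2nd_var} (Kato perturbation theory plus conformal covariance of $\D_g$) and leaves the details to the reader, and your argument carries out precisely that dictionary, with the added (welcome) care about self-adjoint holomorphic dependence of the DtN family and the $t$-dependent boundary measure.
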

\begin{proof}
The proof is analogous to the proof of Theorem~\ref{thm:2nd_var}. Indeed, the important points of the proof were Kato's perturbation theory, which can still be applied to $\D_g$, and conformal covariance of $\Delta_g$. The details are left to the reader.
\end{proof}

\begin{proof}[Proof of Theorem~\ref{thm:cylinder}]

%
As before we will present $\omega$ satisfying the assumptions of Theorem~\ref{thm:2nd_var_St} such that $\alpha$ in equation~\eqref{eq:2nd_var_St} is positive.

For $T<T_1$ the $\sigma_1$-eigenspace $E_1$ of $\D_{g_T}$ is given by $u_1 = \sin\theta$ for $t=\pm T$ and $u_2 = \cos\theta$ for $t=\pm T$, $\|u_1\|^2 = \|u_2\|^2 = 2\pi$, see e.g.~\cite{FSconformal, KM}. Take $\omega = \sin\theta - a\sin3\theta$ for $t=\pm T$, where $a>0$ is to be chosen later. Observe that $E_1$ and $\omega$ lie in the subspace of functions invariant with respect to $T\leftrightarrow (-T)$, the same is true for their various products and linear combinations, hence, in the following we omit the phrase ``for $t = \pm T$'' and simply write e.g. $u_1 = \sin\theta$ meaning that values of all functions are the same on both boundary components. One has that $\|\omega\|_{L^2}^2 = 2\pi(1+a^2)$, $\length(\bd N,g) = 4\pi$ and 
\begin{align*}
&\omega u_1 = \sin\theta(\sin\theta-a\sin3\theta) = \frac{1}{2}(1 - (1+a)\cos2\theta + a\cos4\theta)\\
&\omega u_2 = \cos\theta(\sin\theta-a\sin3\theta) = \frac{1}{2}((1-a)\sin2\theta - a\sin4\theta).
\end{align*}
Therefore, the conditions of Theorem~\ref{thm:2nd_var_St} are satisfied, and $\omega u_1\perp \omega u_2$. Defining 
\[
b_2(T) = \frac{2\tanh(2T) + \tanh(T)}{2\tanh(2T) - \tanh(T)}\quad\text{and}\quad b_4(T) = \frac{4\tanh(4T) + \tanh(T)}{4\tanh(4T) - \tanh(T)}
\]
one, furthermore, obtains
\begin{align*}
& Q_\omega(u_1,u_1) = \frac{\pi}{2}\left(2 -(1+a)^2b_2(T) - a^2 b_4(T) \right) =: 2\pi \mu_1(a);\\
& Q_\omega(u_1,u_2) = 0;\\
& Q_\omega(u_2,u_2) = -\frac{2\pi}{4}((1-a)^2b_2(T) + a^2b_4(T)) =:2\pi \mu_2(a),
\end{align*}
where $\mu (a)= \min\{\mu_1(a),\mu_2(a)\}$. As a result,
\[
\alpha(a) = 2\pi\sigma_1((1+a^2) + 2\mu(a)).
\]
Thus, the theorem follows from the following lemma.
\begin{lemma}
\label{lem:a0}
There exists $a_0\approx 0.2$ such that $\alpha(a_0)>0$.
\end{lemma}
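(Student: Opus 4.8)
The plan is to reduce Lemma~\ref{lem:a0} to elementary one-variable calculus by exploiting the explicit formulas already derived. We need $\alpha(a_0) = 2\pi\sigma_1\bigl((1+a_0^2) + 2\mu(a_0)\bigr) > 0$, i.e. $1 + a_0^2 + 2\mu(a_0) > 0$, where $\mu(a) = \min\{\mu_1(a),\mu_2(a)\}$. Since $\mu_2(a) = -\tfrac14\bigl((1-a)^2 b_2(T) + a^2 b_4(T)\bigr) < 0$ is clearly the more negative of the two for small $a > 0$ (at $a=0$ we have $\mu_1(0) = \tfrac14(2 - b_2(T))$ whereas $\mu_2(0) = -\tfrac14 b_2(T)$, and $b_2(T) > 0$), I expect $\mu(a) = \mu_2(a)$ on the relevant range, which one should verify by comparing $\mu_1$ and $\mu_2$ directly. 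So it suffices to show $1 + a^2 - \tfrac12\bigl((1-a)^2 b_2(T) + a^2 b_4(T)\bigr) > 0$ for some $a = a_0 \approx 0.2$.

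First I would record the behaviour of $b_2(T)$ and $b_4(T)$ as functions of $T \in (0, T_1)$. Writing $f_n(T) = n\tanh(nT)$, we have $b_n(T) = \bigl(f_n(T) + f_1(T)\bigr)/\bigl(f_n(T) - f_1(T)\bigr)$; since $\tanh$ is increasing and concave, $f_n(T) > f_1(T)$ for $n \geq 2$ and $T>0$, so both $b_2, b_4 > 1$. The key point is that $T < T_1$ forces $b_2(T)$ to be bounded: the condition $\coth T = T$ characterising $T_1$ is exactly what makes the rotationally symmetric metric $g_T$ critical, and for $T < T_1$ one has $\tanh T < \tfrac1T$, equivalently $f_1(T) < 1/T \cdot \tanh(T) \cdot \ldots$ — more cleanly, $T\tanh T < 1$. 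I would use this to get an upper bound on $b_2(T)$: as $T \to 0$, $b_2(T) \to 3$ and $b_4(T) \to 5/3$ (by $\tanh(nT) \sim nT$), and as $T$ increases toward $T_1 \approx 1.2$, $f_1(T) = \tanh T$ increases while $f_2, f_4$ increase faster, so $b_2(T)$ and $b_4(T)$ both \emph{decrease} in $T$. Hence on $(0,T_1)$ we have $b_2(T) < b_2(0^+) = 3$ and $b_4(T) < 5/3$, with monotonicity giving the clean uniform bounds.

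Granting $b_2(T) < 3$ and $b_4(T) < 5/3$ for all $T \in (0,T_1)$, I would then simply verify the inequality $g(a) := 1 + a^2 - \tfrac12\bigl((1-a)^2 \cdot 3 + a^2 \cdot \tfrac53\bigr) > 0$ at $a_0 = 1/5$: this gives $g(1/5) = 1 + \tfrac{1}{25} - \tfrac12\bigl(\tfrac{16}{25}\cdot 3 + \tfrac{1}{25}\cdot\tfrac53\bigr) = 1 + \tfrac{1}{25} - \tfrac12\cdot\tfrac{48}{25} - \tfrac{1}{30} = \tfrac{26}{25} - \tfrac{24}{25} - \tfrac{1}{30} = \tfrac{2}{25} - \tfrac{1}{30} > 0$. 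Since $g$ is exactly the lower bound for $1 + a^2 + 2\mu_2(a)$ obtained by replacing $b_2, b_4$ with their suprema, and the replacement only decreases the expression (both $(1-a)^2$ and $a^2$ have positive coefficients), this yields $\alpha(1/5) > 0$ for every $T \in (0, T_1)$ simultaneously, which is even stronger than stated.

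The main obstacle is the bookkeeping around $\mu(a) = \min\{\mu_1,\mu_2\}$: one must rule out the possibility that $\mu_1(a) < \mu_2(a)$ at $a_0$, since then $\alpha$ would be controlled by $\mu_1$ instead. I would handle this by checking $\mu_1(a) - \mu_2(a) = \tfrac14\bigl(2 - (1+a)^2 b_2 - a^2 b_4 + (1-a)^2 b_2 + a^2 b_4\bigr) = \tfrac14\bigl(2 - 4a\, b_2(T)\bigr) = \tfrac12(1 - 2a\, b_2(T))$, so $\mu_1 \geq \mu_2$ precisely when $a\, b_2(T) \leq \tfrac12$; with $a_0 = 1/5$ and $b_2(T) < 3$ this reads $b_2(T) < 5/2$, which unfortunately can \emph{fail} near $T=0$ where $b_2 \to 3$. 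So in that regime $\mu = \mu_1$ and I would instead lower-bound $1 + a^2 + 2\mu_1(a) = 1 + a^2 + \tfrac12(2 - (1+a)^2 b_2 - a^2 b_4)$; at $a_0=1/5$ with $b_2<3, b_4<5/3$ this is $\geq \tfrac{26}{25} + 1 - \tfrac12\cdot\tfrac{36}{25}\cdot 3 \cdot \tfrac{1}{\cdot} \ldots$ — one checks $2 + \tfrac{2}{25} - \tfrac12(\tfrac{36}{25}\cdot\tfrac{3\cdot\text{?}}{}\ldots)$; more carefully $1 + \tfrac1{25} + 1 - \tfrac12(\tfrac{36}{25}b_2 + \tfrac1{25}b_4)$, and since $\tfrac12\cdot\tfrac{36}{25}\cdot 3 = \tfrac{54}{25} > \tfrac{52}{25}$ this bound is \emph{negative}, so the crude estimate is insufficient for small $T$ in the $\mu=\mu_1$ case. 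The honest fix is to split $(0,T_1)$ into two subintervals: for $T$ near $0$ one has $b_2(T), b_4(T)$ close to $3, 5/3$ and should choose $a$ closer to where $\mu_1 = \mu_2$ (namely $a b_2 = 1/2$, so $a \approx 1/6$), while for $T$ bounded away from $0$ the bound $b_2(T) < 5/2$ holds and the $\mu_2$ analysis above applies with $a_0 = 1/5$. Since the statement only asserts existence of some $a_0 \approx 0.2$ for (implicitly) the claimed range, I would either (i) restrict attention to $T$ in a fixed subinterval where $a_0 = 1/5$ works cleanly and note that the small-$T$ case needs a slightly different $a_0 \approx 1/6$, or (ii) optimise: pick $a(T)$ so that $\mu_1(a) = \mu_2(a)$, i.e. $a = 1/(2b_2(T))$, reducing to showing $1 + \tfrac1{4b_2^2} + 2\mu_2\bigl(\tfrac1{2b_2}\bigr) > 0$, a single inequality in $b_2(T) \in (1,3)$ and $b_4(T) \in (1, 5/3)$ which can be verified by the same crude substitution. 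Approach (ii) is cleanest and I would pursue it, with the bounds $1 < b_2 < 3$, $1 < b_4 < 5/3$ from the monotonicity established above as the only analytic input.
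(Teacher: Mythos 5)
There is a genuine gap, and it is exactly at the analytic input you rely on: the monotonicity and limiting values of $b_2,b_4$. These functions are \emph{increasing} in $T$, not decreasing. Writing $b_n(T)=1+\tfrac{2}{n\tanh(nT)\coth(T)-1}$, one checks $\tfrac{d}{dT}\bigl[\tanh(nT)\coth(T)\bigr]<0$, so $b_n$ increases; moreover as $T\to 0^+$ one has $n\tanh(nT)\sim n^2T$, so $b_2\to 5/3$ and $b_4\to 17/15$, not $3$ and $5/3$ (you expanded $\tanh(nT)\sim T$ rather than $nT$). The correct uniform bounds on $(0,T_1)$ therefore come from the right endpoint: $b_2(T)<b_2(1.2)\approx 2.47<2.48$ and $b_4(T)<b_4(1.2)\approx 1.53$, which is what the paper uses. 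With these sharper bounds the whole case analysis you struggled with evaporates: at $a_0=0.2$ both branches are positive, namely $1+a_0^2+2\mu_1(a_0)>2.04-0.72\cdot 2.48-0.02\cdot 1.53>0.22$ and $1+a_0^2+2\mu_2(a_0)>1.04-0.32\cdot 2.48-0.02\cdot 1.53>0.21$, so one never needs to decide whether $\mu=\mu_1$ or $\mu_2$ (and in fact $a_0b_2<0.496<\tfrac12$, so $\mu=\mu_2$ anyway).

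With your weaker (and incorrectly justified) bounds $b_2<3$, $b_4<5/3$ the argument does not close. You noticed yourself that the $\mu_1$-branch estimate fails under these bounds; but your proposed repair (ii) also fails: taking $a=1/(2b_2)$ and substituting the corner values $b_2=3$, $b_4=5/3$ gives $1+\tfrac{1}{36}-\tfrac12\bigl(\tfrac{25}{36}\cdot 3+\tfrac{1}{36}\cdot\tfrac{5}{3}\bigr)=-\tfrac{1}{27}<0$, so the inequality you want to ``verify by the same crude substitution'' is false on the parameter box $(1,3)\times(1,5/3)$, and by continuity it fails on an open set of that box. The box itself is the artifact of the reversed monotonicity: the values $b_2$ near $3$ are never attained for $T\in(0,T_1)$. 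So the missing step is precisely the correct monotonicity computation (derivative of $\tanh(nT)\coth T$ is negative) and the evaluation of $b_2,b_4$ at $T\approx 1.2$; once that is in place, your reduction of the lemma to checking the sign of $1+a^2+2\mu_{1,2}(a)$ at $a_0=0.2$ coincides with the paper's proof.
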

\begin{proof}[Proof of Lemma~\ref{lem:a0}] First, let us record that 
\begin{align*}
\mu_1(a) =  \frac{1}{2} - \frac{(1+a)^2}{4}b_2(T) - \frac{a^2}{4}b_4(T);\\
\mu_2(a) = -\frac{1}{4}\left((1-a)^2b_2(T) + a^2b_4(T)\right).
\end{align*}
Let us observe that $b_2(T)$ and $b_4(T)$ are increasing functions. Indeed,
\[
b_2(T) = 1 + \frac{2}{2\tanh(2T)\coth(T)-1},\quad b_4(T) = 1 + \frac{2}{4\tanh(4T)\coth(T)-1}
\]
then
\[
\frac{d}{dT}\tanh(2T)\coth(T) = \frac{\sinh(2T) - \sinh(2T)\cosh(2T)}{\cosh^2(2T)\sinh^2(T)}<0;
\]
\[
\frac{d}{dT}\tanh(2T)\coth(T) = \frac{2\sinh(2T) - \sinh(4T)\cosh(4T)}{\cosh^2(4T)\sinh^2(T)}<0,
\]
where in the last inequality we used that 
\[
2\sinh(2T)<\sinh(4T) = 2\sinh(2T)\cosh(4T).
\]
Thus, for $T<T_1<1.2$ one has $b_2(T) < b_2(1.2) \approx 2.47069<2.48$ and $b_4(T)<b_4(1.2)\approx 1.52666<1.53$. By a direct computation, we obtain
\[
1+0.2^2 + 2\mu_1(0.2)> 2.04 - 0.72\cdot 2.48  - 0.02\cdot 1.53 = 0.2238>0;
\]
\[
1+ 0.2^2 + 2\mu_2(0.2)>1.04 - 0.32\cdot 2.48 - 0.02\cdot 1.53 = 0.2158>0.
\]
\end{proof}
\end{proof}


\end{document}